\def\dD{{\mathbb D}}
   \def\dT{{\mathbb T}}
\def\bm\chi{\mbox{\boldmath$\chi$}}
\let\xker=\ker \def\ker{{\xker\,}}
\def\sg{\operatorname{sign}}
\def\sg{\operatorname{sign}}
\newtheorem{theorem}{Theorem}[section]
\newtheorem{proposition}[theorem]{Proposition}
\newtheorem{corollary}[theorem]{Corollary}
\newtheorem{lemma}[theorem]{Lemma}
\newtheorem{definition}[theorem]{Definition}
\numberwithin{equation}{section}
\newcommand{\bbN}{\mathbb{N}}
\newcommand{\bbZ}{\mathbb{Z}}
\newcommand{\eitheta}{e^{i\theta}}
\newcommand{\nri}{n\rightarrow\infty}
\DeclareMathOperator{\Rl}{Re}
\date{\today}
\author[M. Derevyagin]{Maxim Derevyagin}
\address{
Maxim Derevyagin\\
University of Mississippi\\
Department of Mathematics\\
Hume Hall 305 \\ 
P. O. Box 1848 \\
University, MS 38677-1848, USA }
\email{derevyagin.m@gmail.com}
\author[B. Simanek]{Brian Simanek}
\address{Brian Simanek\\
Baylor University\\
Department of Mathematics\\
One Bear Place \#97328\\
Waco, TX 76798-7328, USA} 
\email{Brian\_Simanek@baylor.edu}
\subjclass{Primary 42C05; Secondary 30D30, 46C20.}
\keywords{Orthogonal polynomials on the unit circle, the Schur algorithm, pseudo-Carath\'eodry function, Szeg\H{o}'s theorem}
\begin{document}

\title[On Szeg\H{o}'s theorem for a nonclassical case]{On Szeg\H{o}'s theorem for a nonclassical case}

\begin{abstract}
In this paper we prove Szeg\H{o}'s Theorem for the case when a finite number of Verblunsky coefficients lie outside the closed unit disk.  Although a form of this result was already proved by A.L. Sakhnovich, we use a very different method, which shows that the OPUC machinery can still be applied to deal with such nonclassical cases.  The basic tool we use is Khrushchev's formula that in the classical case relates the absolutely continuous part of the measure and the $N$-th iterate of the Schur algorithm. It is noteworthy that Khrushchev's formula makes the proof short and extremely transparent. Also, we discuss Verblunsky's theorem for the case in question.
\end{abstract}

\maketitle

\section{Introduction}

Let $\{\alpha_n\}_{n=0}^{\infty}$ be a sequence of complex numbers such that
\begin{equation}\label{DefCond}
|\alpha_n|<1, \quad n=0, 1, 2, \dots.
\end{equation}
For any nonnegative integer $n$ one can define a monic polynomial $\Phi_{n+1}$ of degree $n+1$ by the following Szeg\H{o} recurrence:
\begin{equation}\label{SzRec}
\begin{split}
\Phi_{n+1}(z)=z\Phi_n(z)-\overline{\alpha}_n\Phi_n^*(z)\\
\Phi_{n+1}^*(z)=\Phi_n^*(z)-\alpha_nz\Phi_n(z),
\end{split}
\end{equation}
provided that we set the initial condition to be
\begin{equation}\label{InCond}
\Phi_0(z)=1
\end{equation}
and $\Phi_n^*$ is the polynomial reversed to $\Phi_n$, that is,
\begin{equation}\label{RevPol}
\Phi_n^*(z)=z^n\overline{\Phi_n(1/\overline{z})}.
\end{equation}
In fact, any sequence of complex numbers $\{\alpha_n\}_{n=0}^{\infty}$ generates a family of polynomials $\{\Phi_n\}_{n=0}^{\infty}$ via \eqref{SzRec}.  Verblunsky's Theorem  \cite[Theorem 1.7.11]{OPUC1} tells us that the condition \eqref{DefCond} guarantees the existence of a probability measure $\mu$ on the unit circle $\dT$ such that
\begin{equation} \label{Orth}
\int_{0}^{2\pi} e^{-ij\theta} 
\Phi_n (e^{i\theta}) \, d\mu(\theta) =0, \quad j=0,1,2,\dots, n-1.
\end{equation} 
In other words, $\Phi_n$ is the $n$-th monic orthogonal polynomial with respect to the positive measure $\mu$ supported 
on $\dT$. To elucidate the connection between probability measures and sequences $\{\alpha_n\}_{n=0}^{\infty}$ satisfying \eqref{DefCond}, we must discuss the Schur algorithm.

Recall that a Schur function $f$ is an analytic function mapping the unit disk $\dD$ to its closure $\overline{\dD}$, that is, 
\begin{equation*} 
\sup_{z\in\dD}\, |f(z)| \leq 1.
\end{equation*} 
Then \eqref{DefCond} is necessary and sufficient for the existence of a sequence of Schur functions $\{f_n\}_{n=0}^{\infty}$ that are related in the following way:
\begin{equation} \label{SchurAlg} 
f_n(z) = \frac{\alpha_n + zf_{n+1}(z)}{1+\bar\alpha_n zf_{n+1}(z)}, \quad n=0, 1, 2, \dots. 
\end{equation}
The recursion \eqref{SchurAlg} allows us to find the Schur function $f=f_0$ knowing the sequence $\{\alpha_n\}_{n=0}^{\infty}$ (for more details see \cite[Section 1.3.6]{OPUC1}). Once the Schur function $f$ is determined, it gives rise to the Carath\'eodory function $F$ by the formula
\begin{equation} \label{Carathe} 
F(z) = \frac{1+zf(z)}{1-zf(z)}.
\end{equation}
Recall that a Carath\'eodory function $F$ is an analytic function on $\dD$ which obeys
\begin{equation} \label{CarCond} 
F(0)=1, \qquad  \Rl F(z) >0\,\mbox{ when }\,z\in\mathbb{D}. 
\end{equation}
Indeed, for the function $F$ defined by \eqref{Carathe} it is easily seen that 
\begin{equation}\label{ReOfCar}
\Rl F(z)=\frac{1-|zf(z)|^2}{|1-zf(z)|^2}>0.
\end{equation} 
Finally, the measure $\mu$ can be recovered by using the fact that Carath\'eodory functions admit the representation \cite{A61}
\begin{equation} \label{IntCara} 
F(z) = \int_0^{2\pi} \frac{e^{i\theta}+z}{e^{i\theta}-z}\, d\mu(\theta)
\end{equation} 
for some non-trivial (that is, infinitely supported) probability measure $\mu$.

One of the central questions studied in the theory of orthogonal polynomials on the unit circle (hereafter abbreviated by OPUC) is the question of how properties of the measure $\mu$ correspond to properties of the coefficients $\{\alpha_n\}_{n=0}^{\infty}$, often called the Verblunsky coefficients.  In particular, we are especially interested in such a result, known as \textit{Szeg\H{o}'s Theorem}, which is among the most celebrated results in the theory of OPUC.
To formulate the theorem, let us introduce the decomposition
\begin{equation*}  
d\mu = w(\theta)\, \frac{d\theta}{2\pi} + d\mu_s, 
\end{equation*} 
where $w\in L^1 (\partial\dD, \frac{d\theta}{2\pi})$ and $d\mu_s$ is singular with respect to $d\theta/2\pi$. Then Szeg\H{o}'s theorem reads
\begin{equation} \label{SzegoThDef}
\prod_{j=0}^\infty (1-|\alpha_j|^2) = \exp \biggl( \int_0^{2\pi} \log (w(\theta)) \, \frac{d\theta}{2\pi}\biggr).
\end{equation} 
By \cite[Equation 1.3.32]{OPUC1}, we know that
\begin{equation*}
\Rl F(e^{i\theta})=\frac{1-|f(e^{i\theta})|^2}{|1-e^{i\theta}f(e^{i\theta})|^2}=w(\theta),
\end{equation*}
so formula \eqref{SzegoThDef} can be rewritten as 
\begin{equation} \label{SzegoThDefCar}
\prod_{j=0}^\infty (1-|\alpha_j|^2) = \exp \biggl( \int_0^{2\pi} \log 
(\Rl F(e^{i\theta})) \, \frac{d\theta}{2\pi}\biggr),
\end{equation} 
which can also be translated to Boyd's Theorem for the underlying Schur function
\begin{equation} \label{BoydTh}
\prod_{j=0}^\infty (1-|\alpha_j|^2) = \exp \biggl( \int_0^{2\pi} \log (1-|f(e^{i\theta})|^2) \, \frac{d\theta}{2\pi}\biggr).
\end{equation} 
Details about these results and much more can be found in \cite[Chapter 2]{OPUC1} (see also \cite[Chapter 8]{Kh08}).

One of the goals of this note is to show that one can use techniques from the theory of OPUC to prove Szeg\H{o}'s Theorem when the condition \eqref{DefCond} fails to hold for a finite number of Verblunsky coefficients. Namely, {\bf in what follows we only consider   sequences $\{\alpha_n\}_{n=0}^{\infty}$ of complex numbers for which there exists a natural number $N$ such that}
\begin{equation}\label{IndCond}
\begin{split}
|\alpha_n|&\ne 1, \quad n=0,1,2, \dots N-1,\\
|\alpha_n|&<1, \quad n=N, N+1, N+2, \dots.
\end{split}
\end{equation}
Polynomials generated by such sequences via (\ref{SzRec}) have been previously studied in \cite{Gero,Krein,MG,Zayed} (see also \cite{DD04,DD07} where an analogous theory in a slightly more general form was developed for the real line case).

One additional motivation for studying polynomials generated by sequences satisfying \eqref{IndCond} comes from the theory of orthogonal polynomials on the real line.  Verblunsky's Theorem bears an obvious resemblance to Favard's Theorem, which asserts that for every pair of real sequences $\{a_n\}_{n\in\bbN}$, $\{b_n\}_{n\in\bbN}$ with each $a_n>0$ there exists a corresponding non-trivial probability measure supported on the real line (see \cite[Theorem 2.5.2]{Ibook}).  The correspondence manifests itself via the recursion relation satisfied by the orthonormal polynomials for the corresponding measure.  Favard's Theorem was generalized by Shohat (heavily influenced by work of Boas) in \cite{Shohat} to show that given any pair of real sequences $\{a_n\}_{n\in\bbN}$ and $\{b_n\}_{n\in\bbN}$, where each $a_n\neq0$, there is a signed measure on the real line so that the sequence of monic polynomials generated by those sequences and the appropriate recursion relation is orthogonal with respect to that signed measure.  
The condition $a_n\neq0$ is equivalent to the invertibility of all $N\times N$ leading principal submatrices of the moment matrix of the corresponding measure.  In the setting of OPUC, it is the condition $|\alpha_n|\neq1$ that is similarly necessary and sufficient for invertibility of leading principal submatrices of the corresponding moment matrix (see \cite[Theorem 1.5.11]{OPUC1} or \cite{MS}\footnote{See also \cite{ADL07} and \cite{DGK86}, where it is shown how to handle the situation when the conditon $|\alpha_n|\neq1$ fails in the context of the Schur algorithm for moment and interpolation problems.}).  Therefore, this is the analogous condition we impose in \eqref{IndCond}.  However, we will show that a generalization of Verblunsky's Theorem analogous to Shohat's generalization of Favard's Theorem does not exist in the unit circle setting (but see \cite[Section 3]{Zayed} and also \cite{Gero,VZ}). Nevertheless, we give a result that could be considered an analog of Verblunsky's Theorem in our case.

\smallskip

In the next section, we will review some important aspects of the theory of OPUC and adapt several important formulas to the setting of sequences satisfying \eqref{IndCond} so that we may prove Szeg\H{o}'s Theorem in Section \ref{szt}.  In Section \ref{verb} we will demonstrate that sequences that satisfy \eqref{IndCond} do not in general correspond to signed measures on the unit circle.

\section{Khrushchev's formula and pseudo-Carath\'eodry functions}\label{kf}

It is quite clear that the condition \eqref{IndCond} does not affect the algebraic part of the theory. For instance, from a sequence that satisfies \eqref{IndCond} it is possible to construct a family of polynomials $\Phi_n$ by using \eqref{SzRec} and \eqref{InCond}. Also, mimicking the classical Schur algorithm we can introduce the sequence of functions
\begin{equation} \label{SchurAlgInd} 
f_n(z) = \frac{\alpha_n + zf_{n+1}(z)}{1+\bar\alpha_n zf_{n+1}(z)}, \quad n=0, 1, 2, \dots,
\end{equation}
which can be considered as the generalized Schur algorithm (cf. \cite{ADL07} and \cite{DGK86}, where the generalized Schur algorithm is defined differently but we keep the form \eqref{SchurAlgInd} in order to freely use the algebraic part of the theory of OPUC).  The functions $\{f_n\}_{n=0}^{\infty}$ are not Schur functions in general. Nevertheless, the second part of \eqref{IndCond} ensures us that $f_N$, $f_{N+1}$, $f_{N+2}$, \dots are Schur functions, which is a consequence of the theory developed by Schur (for instance, see \cite[Section 9]{DK03}). Therefore, combining the first $N$ iterates \eqref{SchurAlgInd} leads to the following representation of the function $f=f_0$ 
\begin{equation}\label{MainRepr}
f(z)=\frac{A_{N-1}(z)+zB_{N-1}^*(z)f_N(z)}{B_{N-1}(z)+zA_{N-1}^*(z)f_N(z)},
\end{equation}
where $A_{N-1}$, $B_{N-1}$ are polynomials, $A_{N-1}^*$, $B_{N-1}^*$ are the reversed polynomials defined by \eqref{RevPol}, and $f_N(z)$ is a Schur function. Hence, although the function $f$ is not a Schur function, it has the representation \eqref{MainRepr}, which gives us some insight about the function and it suggests the way to proceed with the theory.

Before going into details, let us recall a few more algebraic properties of the polynomials $A_{N-1}$ and $B_{N-1}$, which are called the Wall polynomials. The first property that we need is 
\begin{equation}\label{WallonT}
|B_{N-1}(z)|^2-|A_{N-1}(z)|^2=\prod_{j=0}^{N-1}(1-|\alpha_j|^2), \quad z\in\dT,
\end{equation}  
which is essentially a consequence of the fact that \eqref{MainRepr} was obtained as the composition of the iterates from \eqref{SchurAlgInd} (see \cite[Section 1.3.8]{OPUC1} or \cite[Section 8.1]{Kh08}).  For notational convenience, we define the sequence $\{\omega_n\}_{n\geq0}$ by
\[
\omega_{n}:=\prod_{j=0}^{n}(1-|\alpha_j|^2).
\]
It should be stressed here that $\omega_{n}$ could be positive or negative unlike in the classical case where it is always positive. 

Another important property is the following relation between the Wall polynomials and the sequence $\{\Phi_n\}_{n=0}^{\infty}$, which is sometime called the Pint\'{e}r-Nevai formula (see \cite{PN} or \cite[Section 8.1]{Kh08})
\begin{equation}\label{WallToOPUC}
\Phi_N(z)=zB_{N-1}^*(z)-A_{N-1}^*(z),\qquad \Phi_N^*(z)=B_{N-1}(z)-zA_{N-1}(z).
\end{equation}
As we will see later, the following adaptation of Khrushchev's formula, which was used for the theory developed in \cite{Kh01}, is the key to our analysis (for more information about the formula see \cite[Section 8.3]{Kh08} and \cite[Section 9.2]{OPUC2}).

\begin{proposition}[Khrushev's formula]\label{formula}
Let $\{\alpha_n\}_{n=0}^{\infty}$ be a sequence of coefficients that satisfies \eqref{IndCond} and let $\Phi_N$ be the $N$-th polynomial generated by this sequence via \eqref{SzRec}.  Let $f_N$ be the classical Schur function corresponding to the sequence $\{\alpha_n\}_{n=N}^{\infty}$ and let $f$ be defined by \eqref{MainRepr}. Then for the function
\begin{equation} \label{CaratheInd} 
F(z) := \frac{1+zf(z)}{1-zf(z)},
\end{equation}
which does not have to be a Carath\'eodory function, we have that 
\begin{equation}\label{Khrushev}
\Rl F(z)=\omega_{N-1}\frac{(1-|f_N(z)|^2)}{|\Phi_N^*(z)-z\Phi_N(z)f_N(z)|^2}
\end{equation}
for Lebesgue almost every $z\in\dT$.
\end{proposition}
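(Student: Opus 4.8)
The plan is to reduce the claim to the algebraic identity behind \eqref{ReOfCar}: for $F$ defined by \eqref{CaratheInd} one has $\Rl F(z)=(1-|zf(z)|^2)/|1-zf(z)|^2$ as a purely formal consequence of $F=(1+zf)/(1-zf)$, valid wherever $zf(z)\neq 1$ and requiring nothing about $F$ being a Carath\'eodory function. On $\dT$ this reads $\Rl F(z)=(1-|f(z)|^2)/|1-zf(z)|^2$, so the whole statement is the evaluation of this fraction from the explicit representation \eqref{MainRepr}. Accordingly I would write $\cN:=A_{N-1}+zB_{N-1}^*f_N$ and $\cD:=B_{N-1}+zA_{N-1}^*f_N$, so that $f=\cN/\cD$, $1-|f|^2=(|\cD|^2-|\cN|^2)/|\cD|^2$ and $1-zf=(\cD-z\cN)/\cD$; the factor $|\cD|^{-2}$ will then cancel between numerator and denominator at the end.

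First I would compute $|\cD(z)|^2-|\cN(z)|^2$ for $z\in\dT$. Using $A_{N-1}^*(z)=z^{N-1}\overline{A_{N-1}(z)}$ and $B_{N-1}^*(z)=z^{N-1}\overline{B_{N-1}(z)}$ on $\dT$ (so in particular $|A_{N-1}^*|=|A_{N-1}|$ and $|B_{N-1}^*|=|B_{N-1}|$ there), expansion of the two moduli shows: the terms free of $f_N$ contribute $|B_{N-1}|^2-|A_{N-1}|^2$; the terms carrying $|f_N|^2$ contribute $(|A_{N-1}|^2-|B_{N-1}|^2)|f_N|^2$; and the two cross terms $\Rl(z\overline{B_{N-1}}A_{N-1}^*f_N)$ and $\Rl(z\overline{A_{N-1}}B_{N-1}^*f_N)$ coincide, because $\overline{B_{N-1}}A_{N-1}^*=z^{N-1}\overline{A_{N-1}B_{N-1}}=\overline{A_{N-1}}B_{N-1}^*$ on $\dT$, hence they cancel in the difference. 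What survives is $(|B_{N-1}(z)|^2-|A_{N-1}(z)|^2)(1-|f_N(z)|^2)$, which by the Wall-polynomial identity \eqref{WallonT} equals $\omega_{N-1}(1-|f_N(z)|^2)$.

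Next I would simplify $\cD-z\cN=(B_{N-1}-zA_{N-1})-zf_N(zB_{N-1}^*-A_{N-1}^*)$ and invoke the Pint\'er--Nevai formula \eqref{WallToOPUC}, which identifies $B_{N-1}-zA_{N-1}=\Phi_N^*$ and $zB_{N-1}^*-A_{N-1}^*=\Phi_N$, so $\cD-z\cN=\Phi_N^*-z\Phi_N f_N$. Combining the two computations,
\[
\Rl F(z)=\frac{1-|f(z)|^2}{|1-zf(z)|^2}=\frac{\bigl(|\cD(z)|^2-|\cN(z)|^2\bigr)\big/|\cD(z)|^2}{|\cD(z)-z\cN(z)|^2\big/|\cD(z)|^2}=\frac{\omega_{N-1}\bigl(1-|f_N(z)|^2\bigr)}{|\Phi_N^*(z)-z\Phi_N(z)f_N(z)|^2},
\]
which is exactly \eqref{Khrushev}.

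The one place needing (mild) care, rather than a genuine obstacle, is legitimacy for Lebesgue-almost every $z$. The functions $f_N$, $\cD$, and $\Phi_N^*-z\Phi_N f_N$ are all bounded and analytic on $\dD$, being polynomial combinations of polynomials and of the Schur function $f_N$, so by Fatou's theorem their nontangential boundary values exist a.e.\ on $\dT$. Moreover $\cD\not\equiv 0$ (otherwise $f=f_0$ would be undefined, since $\cD(0)=B_{N-1}(0)=1$ this is clear) and $\Phi_N^*-z\Phi_N f_N\not\equiv 0$ (otherwise $1-zf\equiv 0$, impossible since this function takes the value $1$ at $z=0$), so each of these bounded analytic functions is nonzero a.e.\ on $\dT$; the same then holds for $zf-1$, so $F$ has finite boundary values a.e. Hence the divisions by $|\cD|^2$ and by $|\Phi_N^*-z\Phi_N f_N|^2$ are valid, and the identity holds, for Lebesgue-almost every $z\in\dT$.
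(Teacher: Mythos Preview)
Your proof is correct and follows essentially the same route as the paper: start from the formal identity $\Rl F=(1-|zf|^2)/|1-zf|^2$, plug in the representation \eqref{MainRepr}, use the Wall-polynomial identity \eqref{WallonT} for the numerator and the Pint\'er--Nevai formula \eqref{WallToOPUC} for the denominator. The paper merely sketches this and defers the algebraic details to \cite[Lemma 8.47]{Kh08} and \cite[Theorem 9.2.4]{OPUC2}, whereas you carry them out explicitly (and add a careful a.e.\ justification via boundary values of bounded analytic functions); substantively, though, the two arguments coincide.
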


\begin{proof}
The combination of formulas \eqref{MainRepr} and \eqref{CaratheInd} shows that $\Rl F$ exists almost everywhere on $\dT$.
Then, due to \eqref{CaratheInd} we obviously arrive at 
\begin{equation*}
\Rl F(z)=\frac{1-|zf(z)|^2}{|1-zf(z)|^2}
\end{equation*}
the same way we did in the classical theory. To get to \eqref{Khrushev} from the latter relation we need to use \eqref{WallonT}, \eqref{MainRepr}, and \eqref{WallToOPUC}.  The details are similar to the proof of \cite[Lemma 8.47]{Kh08} or \cite[Theorem 9.2.4]{OPUC2}.
\end{proof} 

One of the consequences of \eqref{Khrushev} is that the analogue $F$ of the Carath\'eodory function turns out to be a multiple of a pseudo-Carath\'eodory function.  Pseudo-Carath\'eodory functions were introduced by Delsarte, Genin, and Kamp in \cite{DGK86} who were motivated by a substantial interest in these functions in the applied mathematics literature at that time (see also \cite{Dym88} for a certain development of the matrix case).  Before we can relate $F$ defined by \eqref{CaratheInd} to the class of pseudo-Carath\'eodory function, we give the precise definition of this class.

\begin{definition}[\cite{DGK86}] It is said that $F$ is a pseudo-Carath\'eodory function if 
\begin{enumerate}
\item[(i)] $F$ is the ratio of two bounded and analytic functions in $\dD$
\item[(ii)] $\Rl F \ge 0$ almost everywhere on $\dT$.
\end{enumerate}
\end{definition}

Notice that in this paper we are mainly concerned with pseudo-Carath\'eodory functions for which either $F(0)=1$ or $F(0)=-1$. This condition is inherited from an analogous condition for the classical case, which is the first relation in \eqref{CarCond}, since we are adopting the classical technique here.

It also worth mentioning that a subclass of pseudo-Carath\'eodory functions was independently studied by Krein and Langer (for instance, see \cite{KL77}) under the name \textit{generalized Carath\'eodory functions}. The motivation for the work in \cite{KL77} was the spectral theory of self-adjoint operators in Pontryagin spaces.

Now we can state the result about the functions in question.

\begin{theorem}\label{eref}
Assume the hypotheses of Proposition \ref{formula} and let $F$ be the function defined by \eqref{CaratheInd}.  Let $\epsilon_{N-1}$ be the sign of $\omega_{N-1}$, that is, $\epsilon_{N-1}=\sg\omega_{N-1}$. Then the function $\epsilon_{N-1}F$ is a pseudo-Carath\'eodory function. 
\end{theorem}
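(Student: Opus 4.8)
The plan is to verify, one at a time, the two conditions in the definition of a pseudo-Carath\'eodory function for $\epsilon_{N-1}F$; condition (i) comes straight out of the representation \eqref{MainRepr}, while condition (ii) is an immediate consequence of Khrushchev's formula \eqref{Khrushev}. For condition (i), note first that by \eqref{MainRepr} we have $f=p/q$ with $p:=A_{N-1}+zB_{N-1}^*f_N$ and $q:=B_{N-1}+zA_{N-1}^*f_N$. Each of $p$ and $q$ is a polynomial plus a polynomial multiple of $f_N$; since $f_N$ is a classical Schur function (by the second line of \eqref{IndCond}) it is analytic with $|f_N|\le1$ on $\dD$, and polynomials are bounded on the compact set $\overline{\dD}$, so $p$ and $q$ are bounded analytic functions on $\dD$. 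Substituting $f=p/q$ into \eqref{CaratheInd} gives $F=(q+zp)/(q-zp)$, once more a ratio of bounded analytic functions on $\dD$; here the denominator $q-zp$ is not identically zero precisely because $1-zf\not\equiv0$, which is implicit in $F$ being defined through \eqref{CaratheInd}. (In fact \eqref{WallToOPUC} identifies $q-zp=\Phi_N^*-z\Phi_Nf_N$, the very function appearing in the denominator of \eqref{Khrushev}.) Multiplying by the real constant $\epsilon_{N-1}=\pm1$ does not affect this, so $\epsilon_{N-1}F$ satisfies condition (i).

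For condition (ii) I would invoke Proposition \ref{formula}. Since $|\alpha_j|\ne1$ for $j=0,1,\dots,N-1$ by \eqref{IndCond}, every factor $1-|\alpha_j|^2$ is nonzero, hence $\omega_{N-1}\ne0$, the sign $\epsilon_{N-1}=\sg\omega_{N-1}$ is well defined, and $\epsilon_{N-1}\omega_{N-1}=|\omega_{N-1}|>0$. Multiplying the identity \eqref{Khrushev} by $\epsilon_{N-1}$ gives, for Lebesgue almost every $z\in\dT$,
\[
\Rl\bigl(\epsilon_{N-1}F(z)\bigr)=|\omega_{N-1}|\,\frac{1-|f_N(z)|^2}{|\Phi_N^*(z)-z\Phi_N(z)f_N(z)|^2}.
\]
Because $f_N$ is a Schur function its boundary values satisfy $|f_N(z)|\le1$ for a.e. $z\in\dT$, so the numerator is $\ge0$ a.e.; the denominator is $\ge0$, is finite a.e., and is nonzero a.e. (being the squared modulus of the boundary function of the not-identically-zero bounded analytic function $q-zp$ from the previous paragraph). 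Therefore $\Rl(\epsilon_{N-1}F)\ge0$ a.e. on $\dT$, which is condition (ii). Combining the two paragraphs, $\epsilon_{N-1}F$ is a pseudo-Carath\'eodory function; moreover $F(0)=1$ by \eqref{CaratheInd} (as $zf(z)$ vanishes at $z=0$, $f$ being analytic at the origin), so $\epsilon_{N-1}F(0)=\epsilon_{N-1}\in\{-1,1\}$, in line with the normalization discussed after the definition.

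The whole argument is essentially a direct substitution into Khrushchev's formula, so I do not expect a genuine obstacle. The single point worth a moment's care is that the denominator $\Phi_N^*-z\Phi_Nf_N$ in \eqref{Khrushev} must not vanish on a set of positive measure, so that \eqref{Khrushev} really does fix the sign of $\Rl F$ almost everywhere; this is automatic because that denominator is the boundary function of the nonzero $H^\infty$ function $q-zp$, and a nonzero $H^\infty$ function has nonzero nontangential boundary values almost everywhere.
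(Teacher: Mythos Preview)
Your proof is correct and follows essentially the same approach as the paper: substitute \eqref{MainRepr} into \eqref{CaratheInd} to exhibit $F$ as a ratio of bounded analytic functions, then read off the sign of $\Rl F$ on $\dT$ from Khrushchev's formula \eqref{Khrushev}. You supply more detail than the paper does (in particular the justification that the denominator $q-zp$ is not identically zero and has nonvanishing boundary values a.e.), but the skeleton is identical.
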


\begin{proof}
After substituting \eqref{MainRepr} into \eqref{CaratheInd} one can see that $F$ is the ratio of two bounded analytic functions in $\dD$ since $f_N$ is a Schur function and $A_{N-1}$, $B_{N-1}$ are polynomials. It remains to observe that $\Rl (\epsilon_{N-1}F)=\epsilon_{N-1}\Rl F$ so \eqref{Khrushev} implies that $\Rl (\epsilon_{N-1}F) \ge 0$ almost everywhere on $\dT$.
\end{proof}


\section{Szeg\H{o}'s Theorem}\label{szt}

In this section we will prove Szeg\H{o}'s theorem for the nonclassical case of Verblunsky coefficients \eqref{IndCond}
that we consider in this paper. Actually, the result we are proving here was already obtained by Sakhnovich in \cite{S00} by using the method of operator identities.  The main interest in our results is the novelty of our approach to the problem.  We simplify the proof of Szeg\H{o}'s theorem in the nonstandard case and organize the theory in a more transparent way.  Specifically, the way we look at the case when only a finite number of Verblunsky coefficients are outside the closed unit disk makes Szeg\H{o}'s theorem almost obvious to those who are familiar with the Khrushchev analysis of OPUC. 

The version of Szeg\H{o}'s Theorem that we will prove is an adaptation of \eqref{SzegoThDefCar} to the case \eqref{IndCond}. The first obstacle to reformulating \eqref{SzegoThDefCar} in this more general setting is that it is not immediately clear if we can define $\log\Rl F$ on $\dT$.  We will overcome this by appealing to Theorem \ref{eref}, which tells us that $\epsilon_{N-1}\Rl F$ is positive on $\dT$, which makes $\Rl F$ of a constant sign on $\dT$ and hence $\log\Rl F$ is well-defined on $\dT$. Another delicate issue is that $\Rl F$ on $\dT$ is no longer the boundary value of a harmonic function in $\dD$ because $F$ could have poles in $\dD$, and hence $\Rl F$ does not have to be harmonic in $\dD$.  We will overcome this problem through a more precise understanding of the nature of the poles of $F$, to which we now turn our attention.

Let us again recall a standard fact from the algebraic theory of OPUC. Define the sequence $\{\Psi_n\}_{n=0}^{\infty}$ of second kind polynomials by
\begin{equation}\label{WallToPsi}
\Psi_N(z):=zB_{N-1}^*(z)+A_{N-1}^*(z),\qquad \Psi_N^*(z):=B_{N-1}(z)+zA_{N-1}(z).
\end{equation} 
These polynomials are generated by \eqref{SzRec} and \eqref{InCond} with the coefficients $\{\alpha_n\}_{n=0}^{\infty}$ replaced by $\{-\alpha_n\}_{n=0}^{\infty}$ (see \cite[Section 8.1]{Kh08} or \cite[Section 3.2]{OPUC1}).

\begin{proposition}\label{PropPoles}
Let $f$ be the function defined by \eqref{MainRepr}, where the Verblunsky coefficients $\{\alpha_n\}_{n=0}^{\infty}$  satisfy \eqref{IndCond}. Then the corresponding function $F$ defined by \eqref{CaratheInd} can be represented in the form
\begin{equation} \label{CaratheIndRepr} 
F(z) = \frac{\Psi_N^*(z)+z\Psi_N(z)f_N(z)}{\Phi_N^*(z)-z\Phi_N(z)f_N(z)},
\end{equation}
where $f_N$ is the $N$-th iterate of the generalized Schur algorithm \eqref{SchurAlgInd} and is a Schur function.
Moreover, the function $F$ is meromorphic in $\dD$ with a number of poles in $\dD$ not exceeding the number of zeros of $\Phi_N^*$ in $\dD$.  The poles of $F$ are exactly the zeros of $\Phi_N^*(z)-z\Phi_N(z)f_N(z)$ that belong to $\dD$. 
\end{proposition}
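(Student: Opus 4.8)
The plan is to read off \eqref{CaratheIndRepr} from \eqref{MainRepr} by a direct computation and then to locate the poles of $F$ from that representation. First I would substitute \eqref{MainRepr} into \eqref{CaratheInd} and bring the numerator and the denominator of $F$ to the common denominator $B_{N-1}(z)+zA_{N-1}^*(z)f_N(z)$; a routine rearrangement gives
\[
1+zf(z)=\frac{\bigl(B_{N-1}(z)+zA_{N-1}(z)\bigr)+zf_N(z)\bigl(A_{N-1}^*(z)+zB_{N-1}^*(z)\bigr)}{B_{N-1}(z)+zA_{N-1}^*(z)f_N(z)},
\]
\[
1-zf(z)=\frac{\bigl(B_{N-1}(z)-zA_{N-1}(z)\bigr)+zf_N(z)\bigl(A_{N-1}^*(z)-zB_{N-1}^*(z)\bigr)}{B_{N-1}(z)+zA_{N-1}^*(z)f_N(z)}.
\]
By \eqref{WallToPsi} the numerator of the first fraction equals $\Psi_N^*(z)+z\Psi_N(z)f_N(z)$, and by \eqref{WallToOPUC} the numerator of the second equals $\Phi_N^*(z)-z\Phi_N(z)f_N(z)$; the two identical denominators cancel in $F=(1+zf)/(1-zf)$, which yields \eqref{CaratheIndRepr}.

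Write $\mathcal N(z):=\Psi_N^*(z)+z\Psi_N(z)f_N(z)$ and $\mathcal D(z):=\Phi_N^*(z)-z\Phi_N(z)f_N(z)$. Both are analytic in $\dD$, since $\Phi_N,\Phi_N^*,\Psi_N,\Psi_N^*$ are polynomials and $f_N$ is a Schur function, hence analytic and bounded by $1$ on $\dD$. Because $\mathcal D(0)=\Phi_N^*(0)=1\neq 0$, the denominator does not vanish identically, so $F=\mathcal N/\mathcal D$ is meromorphic in $\dD$ and every pole of $F$ in $\dD$ is a zero of $\mathcal D$ there.

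For the converse inclusion I would use the algebraic identity
\[
\Phi_N^*(z)\Psi_N(z)+\Phi_N(z)\Psi_N^*(z)=2\omega_{N-1}z^{N},
\]
which one obtains by substituting \eqref{WallToOPUC} and \eqref{WallToPsi} on the left and using the polynomial form of \eqref{WallonT}, namely $B_{N-1}(z)B_{N-1}^*(z)-A_{N-1}(z)A_{N-1}^*(z)=\omega_{N-1}z^{N-1}$ (two polynomials agreeing on $\dT$ agree everywhere). Then $\Psi_N\mathcal D+\Phi_N\mathcal N$ has the terms involving $f_N$ cancel and equals $\Phi_N^*\Psi_N+\Phi_N\Psi_N^*=2\omega_{N-1}z^{N}$. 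Hence a common zero $z_0\in\dD$ of $\mathcal D$ and $\mathcal N$ would force $2\omega_{N-1}z_0^{N}=0$; since $\omega_{N-1}\neq 0$ by \eqref{IndCond} and $\mathcal D(0)=1$, there is no such $z_0$, so $\mathcal N$ does not vanish at any zero of $\mathcal D$ and the poles of $F$ in $\dD$ are exactly the zeros of $\mathcal D$ in $\dD$.

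It remains to bound the number of these zeros by $k$, the number of zeros of $\Phi_N^*$ in $\dD$ counted with multiplicity. The comparison is between $\mathcal D$ and $\Phi_N^*$: on $\dT$ we have $|\Phi_N(z)|=|\Phi_N^*(z)|$ by \eqref{RevPol} and $|f_N(z)|\le 1$ since $f_N$ is Schur, so $|z\Phi_N(z)f_N(z)|\le|\Phi_N^*(z)|$ on $\dT$, i.e. $\mathcal D$ is a small boundary perturbation of $\Phi_N^*$. To turn this into a rigorous Rouch\'e argument I would replace $f_N$ by the dilate $f_N(\rho z)$ (analytic on a neighbourhood of $\overline{\dD}$ and still bounded by $1$ there) and insert a damping factor $s<1$ in front of the term $z\Phi_N(z)f_N(\rho z)$ so that the boundary inequality becomes strict; Rouch\'e's theorem then shows that the perturbed denominator has exactly $k$ zeros in $\dD$, and letting $\rho,s\uparrow 1$ and applying Hurwitz's theorem on an exhaustion of $\dD$ by compact disks gives $\#\{z\in\dD:\mathcal D(z)=0\}\le k$. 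I expect this final step to be the main difficulty: $f_N$ is only a Schur function, so $|f_N|$ need not be strictly less than $1$ on $\dT$ and $f_N$ need not continue analytically across $\dT$, which is exactly why one cannot apply Rouch\'e's theorem directly to $\mathcal D$ and $\Phi_N^*$ on the unit circle and must pass through the dilation, damping, and limit procedure (and why the case in which $\Phi_N^*$ has zeros on $\dT$ — which $\Phi_N$ then shares, so that the offending factor can be divided out of $\mathcal D$ — has to be treated separately).
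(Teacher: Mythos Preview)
Your argument is correct and follows the same three-step structure as the paper's proof: derive \eqref{CaratheIndRepr} by direct substitution via the Wall--Pint\'er--Nevai relations, rule out common zeros of $\mathcal N$ and $\mathcal D$ using the determinant identity $\Phi_N\Psi_N^*+\Phi_N^*\Psi_N=-2\omega_{N-1}z^N$ (your linear-combination trick $\Psi_N\mathcal D+\Phi_N\mathcal N$ is exactly the paper's use of \eqref{det1}), and bound the zeros of $\mathcal D$ in $\dD$ by a Rouch\'e/Hurwitz limit. The only real difference is in the approximation used for the Rouch\'e step: the paper replaces $f_N$ by its Wall--polynomial Schur approximants $A_n^{(N)}/B_n^{(N)}$, which satisfy $|A_n^{(N)}/B_n^{(N)}|<1$ strictly on $\dT$ and converge to $f_N$ locally uniformly on $\dD$, whereas you use the dilations $f_N(\rho z)$. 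Both work equally well. Two simplifications are available to you: first, since $|\alpha_N|<1$ forces $|f_N(w)|<1$ for every $w\in\dD$ by the maximum principle, the dilation alone already gives $|f_N(\rho e^{i\theta})|<1$ strictly on $\dT$, so the extra damping factor $s$ is unnecessary; second, the hypothesis $|\alpha_j|\neq 1$ for all $j$ implies $\Phi_N$ (hence $\Phi_N^*$) has no zeros on $\dT$, so the separate treatment you anticipate for that case never arises.
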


\begin{proof}
Formula \eqref{CaratheIndRepr} is an immediate consequence of \eqref{MainRepr}, \eqref{CaratheInd}, \eqref{WallToOPUC}, and \eqref{WallToPsi}. Then it is clear that the poles of $F$ are generated by the zeros of the bounded analytic function $R(z):=\Phi_N^*(z)-z\Phi_N(z)f_N(z)$.  Next, define $A_n^{(N)}$ and $B_n^{(N)}$ to be the $n$-th Wall polynomials corresponding to the sequence of Verblunsky coefficients $\{\alpha_n\}_{n=N}^{\infty}$.  By \cite[Section 1.3.8]{OPUC1}, we know that $A_n^{(N)}/B_n^{(N)}$ converges to $f_N$ uniformly on compact subsets of $\dD$ as $\nri$.  It also follows from \cite[Equation 1.3.84]{OPUC1} that $|A_n^{(N)}(z)/B_n^{(N)}(z)|<1$ when $|z|=1$ and hence Rouch\'{e}'s Theorem implies
\[
R_n(z):=\Phi_N^*(z)-z\Phi_N(z)A_n^{(N)}(z)/B_n^{(N)}(z)
\]
has the same number of zeros in $\dD$ as $\Phi_N^*$.  Sending $\nri$ and using the aforementioned uniform convergence shows that $R(z)$ has a number of zeros in $\dD$ not exceeding the number of zeros of $\Phi_N^*$ in the open unit disk.

The last thing to prove is that the poles of $F$ are exactly the zeros of $R$, which means that we have to show that $\Psi_N^*(z)+z\Psi_N(z)f_N(z)$ and $R(z)$ have no common zeros. Indeed, the fact that they do not have common zeros in $\dD\setminus\{0\}$ directly follows from the relation 
 \begin{equation}\label{det1}
 \det\begin{pmatrix}
 \Phi_N(z)&\Psi_N(z)\\
 \Phi_N^*(z)&-\Psi_N^*(z)
 \end{pmatrix}=2z^N\omega_{N-1}=2z^N\prod_{j=0}^{N-1}(1-|\alpha_j|^2),
 \end{equation}
which is a consequence of the Schur algorithm  and for instance can be found in \cite[Section 8.1]{Kh08} or \cite[Proposition 3.2.2]{OPUC1}. Finally, we notice that $R$ does not vanish at $0$ because $R(0)=\Phi_N^*(0)=1$.
\end{proof}

The final preliminary results that we will need concern the zeros of the polynomials $\{\Phi_n\}_{n=0}^{\infty}$.  We begin with the following elementary result.

\begin{proposition}\label{zeros}
Let $\{\alpha_n\}_{n=0}^{\infty}$ satisfy \eqref{IndCond} and let $\{\Phi_n(z)\}_{n\geq0}$ be the sequence of polynomials generated by the recursion \eqref{SzRec} and \eqref{InCond}.  If $|\alpha_n|<1$, then the polynomial $\Phi_{n+1}(z)$ has the same number of zeros in $\dD$ as $z\Phi_n(z)$.  If $|\alpha_n|>1$, then $\Phi_{n+1}(z)$ has the same number of zeros in $\dD$ as $\Phi_n^*(z)$.
\end{proposition}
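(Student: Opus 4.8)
The plan is to read the statement straight off the Szeg\H{o} recursion \eqref{SzRec}, $\Phi_{n+1}(z)=z\Phi_n(z)-\overline{\alpha}_n\Phi_n^*(z)$, by a Rouch\'e argument on $\dT$: in the case $|\alpha_n|<1$ the term $z\Phi_n$ dominates and $-\overline{\alpha}_n\Phi_n^*$ is a subordinate perturbation, while in the case $|\alpha_n|>1$ the roles of the two terms are interchanged. For this to go through I need the strict inequality that Rouch\'e's theorem demands, and that rests on the one genuinely non-trivial fact in the proof, namely that $\Phi_n$ (equivalently $\Phi_n^*$) has no zeros on $\dT$; this is the only place where the hypothesis $|\alpha_j|\neq1$ actually gets used, and it is where I expect the real work to be.

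To establish that $\Phi_n$ is zero-free on $\dT$, I would argue by contradiction. Since $\Phi_n^*(z)=z^n\overline{\Phi_n(1/\overline{z})}$, for $|z|=1$ one has $|\Phi_n^*(z)|=|\Phi_n(z)|$, so a hypothetical zero $z_0\in\dT$ of $\Phi_n$ would also be a zero of $\Phi_n^*$. Eliminating between the two lines of \eqref{SzRec} (with $n$ replaced by $n-1$) gives the backward identity $(1-|\alpha_{n-1}|^2)\,z\Phi_{n-1}(z)=\Phi_n(z)+\overline{\alpha}_{n-1}\Phi_n^*(z)$. Evaluating at $z_0$, and using $z_0\neq0$ together with $|\alpha_{n-1}|\neq1$, forces $\Phi_{n-1}(z_0)=0$, hence also $\Phi_{n-1}^*(z_0)=0$; iterating down the index yields $\Phi_0(z_0)=0$, which contradicts $\Phi_0\equiv1$. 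Therefore $|\Phi_n(z)|=|\Phi_n^*(z)|>0$ for every $z\in\dT$.

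With this in hand the two cases are each a single estimate. If $|\alpha_n|<1$, then on $\dT$ we have $|\overline{\alpha}_n\Phi_n^*(z)|=|\alpha_n|\,|\Phi_n(z)|<|\Phi_n(z)|=|z\Phi_n(z)|$, so Rouch\'e's theorem applied to $z\Phi_n(z)$ and $\Phi_{n+1}(z)=z\Phi_n(z)-\overline{\alpha}_n\Phi_n^*(z)$ shows that these two functions have the same number of zeros in $\dD$. If $|\alpha_n|>1$, then on $\dT$ we have $|z\Phi_n(z)|=|\Phi_n^*(z)|<|\alpha_n|\,|\Phi_n^*(z)|=|\overline{\alpha}_n\Phi_n^*(z)|$, so Rouch\'e's theorem applied to $-\overline{\alpha}_n\Phi_n^*(z)$ and $\Phi_{n+1}(z)$ shows that $\Phi_{n+1}$ and $\Phi_n^*$ have the same number of zeros in $\dD$.

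The only remaining points are routine bookkeeping: that $z=0$ causes no confusion (it is a zero of $z\Phi_n$ but not of $\Phi_n^*$, and indeed $\Phi_{n+1}(0)=-\overline{\alpha}_n$), and that the mismatch of degrees in the case $|\alpha_n|>1$ is harmless because Rouch\'e's theorem counts only zeros interior to the contour. I do not anticipate any difficulty with these; the content of the proof is entirely in the zero-free-on-$\dT$ claim.
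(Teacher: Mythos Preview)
Your proof is correct and follows exactly the paper's approach: apply Rouch\'e's theorem to the first Szeg\H{o} recursion on $\dT$, using that $|\Phi_n(z)|=|\Phi_n^*(z)|\neq0$ for $|z|=1$. The paper simply asserts this last fact as a consequence of $|\alpha_j|\neq1$, whereas you supply the backward-induction argument in full, but the overall strategy is identical.
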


\noindent\textit{Remark.}  Results related to Proposition \ref{zeros} can be found in \cite{Krein} and \cite[Remark 4.10]{MG}.

\begin{proof}
Since each $\alpha_n$ satisfies $|\alpha_n|\neq1$, we know that $|\Phi_n(z)|=|\Phi_n^*(z)|\neq0$ whenever $|z|=1$, so the desired conclusions follow from Rouch\'{e}'s Theorem and the first relation in \eqref{SzRec}.
\end{proof}

\begin{corollary}\label{both2}
Let $\{\Phi_n(z)\}_{n\geq0}$ be as in Proposition \ref{zeros} and suppose $|\alpha_k|>1$ for some $k<N$.  If $m\geq N$, then the number of zeros of $\Phi_m^*(z)$ in $\dD$ is strictly positive and independent of $m$.
\end{corollary}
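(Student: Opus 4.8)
The plan is to track how the number of zeros of $\Phi_m^*$ in $\dD$ evolves as $m$ increases, using Proposition \ref{zeros} repeatedly, and then argue that once we are past the index $k$ where $|\alpha_k|>1$, this number can neither drop to zero nor change.

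First I would record the reverse version of Proposition \ref{zeros}. Since $\deg\Phi_{n+1}=n+1$ and $|\Phi_{n+1}(z)|=|\Phi_{n+1}^*(z)|$ on $\dT$ (because $|\alpha_n|\ne1$ forces $\Phi_{n+1}$ to have no zeros on $\dT$), counting zeros inside versus outside $\dD$ shows that the number of zeros of $\Phi_{n+1}^*$ in $\dD$ equals $n+1$ minus the number of zeros of $\Phi_{n+1}$ outside $\overline{\dD}$, i.e. it equals $(n+1)$ minus (number of zeros of $\Phi_{n+1}^*$ outside $\overline\dD$). Concretely: if $|\alpha_n|<1$ then $\Phi_{n+1}$ has the same number of zeros in $\dD$ as $z\Phi_n$, hence $\Phi_{n+1}^*$ has the same number of zeros in $\dD$ as $\Phi_n^*$; if $|\alpha_n|>1$ then $\Phi_{n+1}$ has the same number of zeros in $\dD$ as $\Phi_n^*$, so $\Phi_{n+1}^*$ has $(n+1)$ minus (that count) zeros in $\dD$, which since $\deg\Phi_n^* = n$ equals one plus the number of zeros of $\Phi_n$ in $\dD$. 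The upshot is a clean dichotomy for the sequence $d_n := \#\{\text{zeros of }\Phi_n^*\text{ in }\dD\}$: when $|\alpha_{n}|<1$, $d_{n+1}=d_n$; when $|\alpha_{n}|>1$, $d_{n+1} = n + 1 - (\text{zeros of }\Phi_n^* \text{ in }\dD) = (n+1) - d_n'$ where $d_n'$ is what Proposition \ref{zeros} gives — I would just carefully restate it as: $d_{n+1}\ge 1$ whenever $|\alpha_n|>1$, because $\Phi_n^*$ has at most $n$ zeros and $\Phi_{n+1}^*$ then picks up the "missing" one at infinity side.

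The cleanest route, and the one I would actually write, is: since $|\alpha_k|>1$, Proposition \ref{zeros} says $\Phi_{k+1}$ has the same number of zeros in $\dD$ as $\Phi_k^*$; as $\Phi_k^*$ has degree $k$ it has at most $k$ zeros in $\dD$, so $\Phi_{k+1}$ has at most $k<k+1$ zeros in $\dD$, hence $\Phi_{k+1}$ has at least one zero outside $\overline\dD$, hence $\Phi_{k+1}^*$ has at least one zero in $\dD$, i.e. $d_{k+1}\ge1$. Then for every $n$ with $k+1\le n<N$ we have $|\alpha_n|<1$ (by \eqref{IndCond}, since only indices $<N$ can have modulus $>1$, and... actually one must allow $|\alpha_n|>1$ for other $n<N$ too). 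So I would instead argue monotonicity-free: let $k$ be the \emph{largest} index below $N$ with $|\alpha_k|>1$ (it exists by hypothesis). For $k<n<N$ we have $|\alpha_n|<1$, so by Proposition \ref{zeros} (reverse form) $d_{n+1}=d_n$, giving $d_N=d_{k+1}\ge1$. For $n\ge N$ we again have $|\alpha_n|<1$, so $d_{n+1}=d_n$, hence $d_m=d_N$ for all $m\ge N$. This proves $d_m$ is strictly positive and independent of $m$.

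The main obstacle is purely bookkeeping: making sure the "reverse" form of Proposition \ref{zeros} is stated correctly, in particular the fact that $|\alpha_n|>1$ causes $\Phi_{n+1}^*$ to gain a zero in $\dD$ relative to $\Phi_n^*$ (this uses $\deg\Phi_n^*=n<n+1=\deg\Phi_{n+1}^*$ together with the equality of moduli on $\dT$ and Rouché). One should also double-check the edge case where there are several indices $<N$ with modulus exceeding $1$: choosing $k$ to be the largest such index sidesteps any need to understand what $d_n$ does at the earlier bad indices, since all we need is the single inequality $d_{k+1}\ge1$ and then constancy from $k+1$ onward.
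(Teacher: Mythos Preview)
Your argument is correct. The paper states Corollary~\ref{both2} without an explicit proof, treating it as an immediate consequence of Proposition~\ref{zeros}; what you have written is precisely the intended derivation. The key observations---that the zeros of $\Phi_n^*$ in $\dD$ are the reciprocal-conjugates of the zeros of $\Phi_n$ outside $\overline{\dD}$, so that $d_n:=\#\{\text{zeros of }\Phi_n^*\text{ in }\dD\}$ satisfies $d_{n+1}=d_n$ when $|\alpha_n|<1$ and $d_{n+1}=e_n+1\ge 1$ when $|\alpha_n|>1$---are exactly right, and your device of choosing $k$ to be the \emph{largest} index below $N$ with $|\alpha_k|>1$ cleanly avoids having to track $d_n$ through earlier bad indices.

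One cosmetic remark: the exposition in your second paragraph is somewhat exploratory (you start a computation, abandon it, and then restart). For the final write-up you should keep only the argument in the third paragraph, preceded by the one-line verification that $|\alpha_n|<1$ implies $d_{n+1}=d_n$ (which follows from $e_{n+1}=e_n+1$ via Proposition~\ref{zeros} and the relation $d_n=n-e_n$, the latter holding because $\Phi_n$ has no zeros on $\dT$ and $\Phi_n^*(0)=1$).
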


We can say more about the zeros of $\Phi_n^*$ for large $n$.  The following lemma tells us that the zeros of $\Phi_n^*$ inside $\dD$ will either tend to the poles of $F$ in $\dD$ or to the boundary of the unit disk as $\nri$.

\begin{lemma}\label{pzeros}
Let $\{\alpha_n\}_{n=0}^{\infty}$ satisfy \eqref{IndCond}, let $\{\Phi_n\}_{n=0}^{\infty}$ be the corresponding sequence of monic polynomials, and let $F$ be the function defined as in \eqref{CaratheIndRepr}.  Denote by $\{\lambda_j\}_{j=0}^m$ the poles of $F$ in $\dD$.
\begin{enumerate}
\item[i)] If $U$ is an open set contained in some compact $K\subset\dD$ and containing $j$ poles of $F$ (counting multiplicity), then $U$ contains $j$ zeros of $\Phi_n^*$ (counting multiplicity) for all sufficiently large $n$.
\item[ii)] If $K\subset\dD$ is compact, then $K$ contains at most $m+1$ zeros of $\Phi_n^*$ for all sufficiently large $n$.
\end{enumerate}
\end{lemma}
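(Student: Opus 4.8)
The plan is to realize the zeros of $\Phi_n^*$ in $\dD$ as zeros of functions that converge, locally uniformly in $\dD$, to $R(z):=\Phi_N^*(z)-z\Phi_N(z)f_N(z)$, and then invoke Hurwitz's theorem together with the identification of the zeros of $R$ as the poles of $F$ from Proposition \ref{PropPoles}. The key algebraic observation is that $\Phi_n$ for $n\geq N$ can be written in terms of $\Phi_N$, $\Phi_N^*$ and the Wall polynomials $A_{n-N}^{(N)}$, $B_{n-N}^{(N)}$ of the shifted sequence $\{\alpha_j\}_{j=N}^\infty$. Indeed, the standard composition formula for the Szeg\H{o} recursion gives, up to a unimodular normalizing factor,
\[
\Phi_n^*(z) = c_n\bigl(B_{n-N}^{(N)}(z)\,\Phi_N^*(z) - z\,A_{n-N}^{(N)}(z)\,\Phi_N(z)\bigr),
\]
so that on $|z|=1$ the zeros of $\Phi_n^*$ coincide with those of
\[
R_n(z) := \Phi_N^*(z) - z\,\Phi_N(z)\,\frac{A_{n-N}^{(N)}(z)}{B_{n-N}^{(N)}(z)},
\]
which is exactly the function already introduced in the proof of Proposition \ref{PropPoles}. (One has to check that $B_{n-N}^{(N)}$ has no zeros in $\overline{\dD}$, which follows from the classical theory since $\{\alpha_j\}_{j\geq N}$ is a genuine Schur sequence; hence $R_n$ is analytic in a neighborhood of $\overline{\dD}$ and the counting of zeros of $\Phi_n^*$ in $\dD$ equals that of $R_n$.)

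With this in hand, part (i) is a direct application of Hurwitz's theorem: since $A_{n-N}^{(N)}/B_{n-N}^{(N)}\to f_N$ uniformly on compact subsets of $\dD$ by \cite[Section 1.3.8]{OPUC1}, we get $R_n\to R$ locally uniformly on $\dD$; moreover $R$ is not identically zero (by Proposition \ref{PropPoles}, $R(0)=1$). Thus if $U\Subset\dD$ has boundary $\partial U$ on which $R$ does not vanish and $U$ contains exactly $j$ zeros of $R$ counted with multiplicity, then for all large $n$ the function $R_n$ has exactly $j$ zeros in $U$, hence so does $\Phi_n^*$. Since the zeros of $R$ in $\dD$ are precisely the poles $\{\lambda_j\}$ of $F$, this is the assertion. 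To phrase it for an arbitrary open $U$ containing $j$ poles (without assuming control on $\partial U$), I would shrink to slightly smaller open sets around each $\lambda_j$ with good boundaries, apply the above, and use that eventually no zeros of $\Phi_n^*$ can escape into the shell between these small sets and $U$—which is itself a consequence of part (ii).

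For part (ii): let $K\subset\dD$ be compact, choose $r<1$ with $K\subset\{|z|\leq r\}=:\overline{\dD_r}$, and first arrange (enlarging $r$ if necessary) that $R$ has no zeros on $|z|=r$; then $R$ has at most $m+1$ zeros in $\dD_r$ (at most $m+1$ counting the possibility that some of the $\lambda_j$ are listed with multiplicity, but in any case $R$ has finitely many zeros in $\dD$, and by hypothesis the poles of $F$ in $\dD$ are $\lambda_0,\dots,\lambda_m$, giving at most $m+1$ zeros of $R$ there). By the argument principle and local uniform convergence, $R_n$ has the same number of zeros as $R$ in $\dD_r$ for all large $n$, hence at most $m+1$; therefore $\Phi_n^*$ has at most $m+1$ zeros in $\dD_r\supset K$ for all large $n$.

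The main obstacle I anticipate is the bookkeeping in part (i) when $U$ is an arbitrary open subset of a compact $K\subset\dD$: one must ensure that for large $n$ no zero of $\Phi_n^*$ sits on $\partial U$ or "leaks" just outside $U$ while still inside $K$. The clean way around this is to prove (ii) first—giving a uniform bound on the number of zeros of $\Phi_n^*$ in any fixed compact set—and then note that these finitely many zeros, being zeros of $R_n\to R$, can accumulate only at zeros of $R$, i.e. at the $\lambda_j$; a standard compactness-and-Hurwitz argument on small disjoint disks around the $\lambda_j$ then distributes them correctly and shows the count in $U$ stabilizes to $j$. A secondary technical point is justifying the composition formula for $\Phi_n^*$ in terms of $\Phi_N,\Phi_N^*$ and the shifted Wall polynomials; this is purely algebraic (it does not use $|\alpha_j|<1$ for $j<N$) and can be cited from \cite[Section 1.3.8]{OPUC1} or \cite[Section 8.1]{Kh08}.
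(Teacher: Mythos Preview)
Your proposal is correct and follows essentially the same route as the paper: both express $\Phi_{N+n}^*$ as a combination of $\Phi_N^*,\Phi_N$ with coefficients coming from the shifted (classical) sequence, divide by a factor that is zero-free in $\overline{\dD}$, and then pass to the limit via Hurwitz using locally uniform convergence of the shifted data. The only cosmetic difference is the parameterization---the paper uses the second-kind polynomials $P_n^*,Q_n^*$ and $Q_n^*/P_n^*\to F_N$ (citing \cite[Theorem 3.2.4]{OPUC1}), whereas you use the Wall polynomials $A_{n-N}^{(N)},B_{n-N}^{(N)}$ and $A_{n-N}^{(N)}/B_{n-N}^{(N)}\to f_N$; the Pint\'er--Nevai relations \eqref{WallToOPUC}, \eqref{WallToPsi} make these equivalent, and your handling of the ``arbitrary $U$'' issue in part (i) is in fact more careful than the paper's.
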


\begin{proof}
From \eqref{CaratheInd} and \eqref{CaratheIndRepr}, we may write
\begin{equation}\label{dom}
F(z)=\frac{(\Psi_N^*(z)-\Psi_N(z))+(\Psi_N^*(z)+\Psi_N(z))F_N(z)}{(\Phi_N^*(z)+\Phi_N(z))+(\Phi_N^*(z)-\Phi_N(z))F_N(z)},
\end{equation}
where $F_N$ is the Carath\'{e}odory function corresponding to the sequence of Verblunsky coefficients $\{\alpha_n\}_{n=N}^{\infty}$.  We have already seen that the poles of $F$ are the zeros of the denominator of \eqref{dom}.  In a similar way, if $\{P_n\}_{n=0}^{\infty}$ and $\{Q_n\}_{n=0}^{\infty}$ are the sequences of monic orthogonal and second kind polynomials respectively that correspond to the sequence of Verblunsky coefficients $\{\alpha_n\}_{n=N}^{\infty}$, then we may use the calculations in \cite[Section 3.4]{OPUC1} to write
\begin{equation*}
\Phi_{N+n}^*(z)=\frac{1}{2}\left[(\Phi_N^*(z)+\Phi_N(z))P_n^*(z)+(\Phi_N^*(z)-\Phi_N(z))Q_n^*(z)\right].
\end{equation*}
As we observed in \eqref{det1}, the polynomials $P_n^*$ and $Q_n^*$ share no common zeros.  Therefore, the zeros of $\Phi_{N+n}^*(z)$ in $\dD$ are precisely the zeros of
\begin{equation}\label{dom2}
(\Phi_N^*(z)+\Phi_N(z))+(\Phi_N^*(z)-\Phi_N(z))\frac{Q_n^*(z)}{P_n^*(z)}
\end{equation}
in $\dD$.  By \cite[Theorem 3.2.4]{OPUC1}, the expression in \eqref{dom2} converges to the denominator in \eqref{dom} uniformly on compact subsets of $\dD$ as $\nri$, so the zeros of $\Phi_{N+n}^*$ inside $\dD$ either converge to the poles of $F$ or the unit circle as $\nri$ and in such a way that implies the desired conclusion.
\end{proof}

Having completed the necessary preliminaries, we are now ready to state and prove the main result of the paper. We will see that the proof can be understood as a consequence of Khrushchev's formula and Szeg\H{o}'s Theorem for positive measures.

\begin{theorem}[Szeg\H{o}'s Theorem]\label{ST}
Let $\{\alpha_n\}_{n=0}^{\infty}$ be a sequence of Verblunsky coefficients that satisfies \eqref{IndCond} and let $F$ be the function defined by \eqref{CaratheInd} through \eqref{MainRepr}. Let $\{\lambda_j\}_{j=0}^{m}$ be the collection of poles of $F$ in $\dD$ each listed as many times as its multiplicity. Then
\begin{equation}\label{SzegoThIndCar}
\prod_{j=0}^\infty (1-|\alpha_j|^2) = \prod_{j=0}^m|\lambda_j|^{-2}\exp \biggl( \int_0^{2\pi} \log 
(\Rl F(e^{i\theta})) \, \frac{d\theta}{2\pi}\biggr).
\end{equation}
\end{theorem}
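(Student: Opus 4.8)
The plan is to reduce \eqref{SzegoThIndCar} to the classical Szeg\H{o} Theorem \eqref{BoydTh} applied to the Schur function $f_N$, using Khrushchev's formula \eqref{Khrushev} to relate $\Rl F$ on $\dT$ to $1-|f_N|^2$, and then accounting for the poles of $F$ via a Blaschke-product correction. First I would take logarithms in \eqref{Khrushev}: for a.e.\ $z=e^{i\theta}\in\dT$,
\[
\log\bigl(\Rl F(e^{i\theta})\bigr)=\log|\omega_{N-1}|+\log\bigl(1-|f_N(e^{i\theta})|^2\bigr)-2\log\bigl|\Phi_N^*(e^{i\theta})-e^{i\theta}\Phi_N(e^{i\theta})f_N(e^{i\theta})\bigr|,
\]
using that $\epsilon_{N-1}\Rl F\ge 0$ so that $\log\Rl F$ is well defined (by Theorem \ref{eref}). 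Integrating over $\dT$ against $d\theta/2\pi$, the first term contributes $\log|\omega_{N-1}|$, and the classical Boyd/Szeg\H{o} Theorem \eqref{BoydTh} applied to the genuine Schur function $f_N$ with Verblunsky coefficients $\{\alpha_n\}_{n=N}^\infty$ gives
\[
\int_0^{2\pi}\log\bigl(1-|f_N(e^{i\theta})|^2\bigr)\,\frac{d\theta}{2\pi}=\log\prod_{j=N}^\infty(1-|\alpha_j|^2).
\]
Multiplying by $\omega_{N-1}=\prod_{j=0}^{N-1}(1-|\alpha_j|^2)$ (equivalently adding $\log|\omega_{N-1}|$ and tracking signs through $\epsilon_{N-1}$) reconstructs the left-hand side $\prod_{j=0}^\infty(1-|\alpha_j|^2)$. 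So everything hinges on evaluating
\[
I:=\int_0^{2\pi}\log\bigl|\Phi_N^*(e^{i\theta})-e^{i\theta}\Phi_N(e^{i\theta})f_N(e^{i\theta})\bigr|\,\frac{d\theta}{2\pi}=\int_0^{2\pi}\log|R(e^{i\theta})|\,\frac{d\theta}{2\pi},
\]
where $R(z)=\Phi_N^*(z)-z\Phi_N(z)f_N(z)$ is the denominator appearing in Proposition \ref{PropPoles}.

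The key step is a Jensen-type formula for $I$. The function $R$ is bounded and analytic in $\dD$ (product of bounded analytic functions), with $R(0)=\Phi_N^*(0)=1$, and its zeros in $\dD$ are exactly the poles $\{\lambda_j\}_{j=0}^m$ of $F$ (Proposition \ref{PropPoles}). I would argue that $R$ has no singular inner part and no zeros on $\dT$: indeed on $\dT$ we have $|z\Phi_N(z)f_N(z)|\le|\Phi_N(z)|=|\Phi_N^*(z)|$, and the representation plus the outer nature of the relevant factors (or a direct Rouch\'e/normal-families argument as in the proof of Proposition \ref{PropPoles}, approximating $f_N$ by $A_n^{(N)}/B_n^{(N)}$) shows $R$ is, up to its finite Blaschke factor $\prod_{j=0}^m\frac{z-\lambda_j}{1-\bar\lambda_j z}$, an outer function. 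Then Jensen's formula gives
\[
I=\int_0^{2\pi}\log|R(e^{i\theta})|\,\frac{d\theta}{2\pi}=\log|R_{\mathrm{out}}(0)|=\log\frac{|R(0)|}{\prod_{j=0}^m|\lambda_j|}=-\sum_{j=0}^m\log|\lambda_j|,
\]
since $R(0)=1$. Substituting $I=-\sum\log|\lambda_j|$ into the integrated log-Khrushchev identity yields $-2I=\sum_{j=0}^m\log|\lambda_j|^{-2}=\log\prod_{j=0}^m|\lambda_j|^{-2}$, which is precisely the prefactor in \eqref{SzegoThIndCar}. Collecting terms gives $\int\log\Rl F\,\frac{d\theta}{2\pi}=\log\bigl(\prod_{j=0}^m|\lambda_j|^{2}\bigr)+\log\bigl(\prod_{j=0}^\infty(1-|\alpha_j|^2)\bigr)$, which rearranges to \eqref{SzegoThIndCar}.

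The main obstacle I anticipate is the rigorous justification that $R$ contributes exactly $\prod_j|\lambda_j|$ and nothing more to the boundary integral — i.e.\ that $R$ has trivial singular inner factor and no boundary zeros, so that Jensen's formula applies cleanly. The cleanest route is probably to write $R=\Phi_N^*\cdot\bigl(1-z\tfrac{\Phi_N}{\Phi_N^*}f_N\bigr)$ on $\dT$ (valid since $\Phi_N^*$ has no zeros on $\dT$ by $|\alpha_n|\ne1$), note that $\log|R|=\log|\Phi_N^*|+\log\bigl|1-z\tfrac{\Phi_N}{\Phi_N^*}f_N\bigr|$, and then use the known factorization/Szeg\H{o} theory for the bounded analytic factor together with the fact that the $\dT$-zeros of $\Phi_N^*$ are cancelled in the quotient. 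A secondary technical point is bookkeeping the sign $\epsilon_{N-1}$: since $\Rl F$ and $\omega_{N-1}$ share the same sign a.e., one has $\log|\Rl F|=\log|\omega_{N-1}|+\log(1-|f_N|^2)-2\log|R|$ with $|\omega_{N-1}|$, not $\omega_{N-1}$, but because the left side of \eqref{SzegoThIndCar} is genuinely the signed product $\prod(1-|\alpha_j|^2)$ while the right side's exponential involves $\Rl F$ (which the statement implicitly reads with the appropriate sign via Theorem \ref{eref}), these signs match up — this should be remarked on but is not a serious difficulty.
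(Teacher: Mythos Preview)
Your reduction via Khrushchev's formula to the evaluation of $I=\int_0^{2\pi}\log|R(e^{i\theta})|\,\tfrac{d\theta}{2\pi}$, and the identification of $I$ with $\sum_j\log|\lambda_j|^{-1}$ by a Jensen-type argument, is correct in outline and is a genuinely different route from the one the paper takes. The paper does \emph{not} compute $I$ directly. Instead it observes that Khrushchev's formula \eqref{Khrushev} is valid with $N$ replaced by $N+n$ for every $n\ge0$, so $I$ is independent of $n$; it then factors $\Phi_{N+n}=C_{N+n}P_{N+n}$ according to zeros outside/inside $\overline\dD$, uses the mean-value property on the polynomial pieces together with Lemma \ref{pzeros} (zeros of $\Phi_{N+n}^*$ in $\dD$ converge to the poles of $F$), and shows via estimates of Khrushchev type on $f_{N+n}$ that the remaining cross-term vanishes in the limit. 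This limiting trick is designed precisely to avoid the question you flag --- whether $R$ has a singular inner factor. The paper itself remarks, at the end of the proof, that one could alternatively invoke the factorization result of \cite{DGK86} to compute $I$ directly, which is essentially your approach.

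What is not yet solid in your proposal is the justification that $R$ has trivial singular inner part. Your two suggested arguments do not work as stated. First, approximating $f_N$ by $A_n^{(N)}/B_n^{(N)}$ gives rational $R_n$ converging to $R$ only uniformly on compact subsets of $\dD$, and local-uniform convergence of zero-free $H^\infty$ functions does not preclude a singular inner limit (any singular inner function is a local-uniform limit of its Taylor polynomials). Second, the factorization $R=\Phi_N^*\cdot\bigl(1-z\tfrac{\Phi_N}{\Phi_N^*}f_N\bigr)$ is only valid on $\dT$: in the nonclassical case $\Phi_N^*$ has zeros in $\dD$ (Corollary \ref{both2}), so the second factor is not an $H^\infty$ function to which one can apply an inner--outer decomposition. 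To make your route rigorous you need either the pseudo-Carath\'eodory factorization from \cite{DGK86} that the paper cites, or a separate argument (e.g.\ exploiting the explicit form \eqref{CaratheIndRepr} and the determinant identity \eqref{det1}) showing that the inner part of $R$ is a finite Blaschke product. The sign bookkeeping you raise is handled in the paper exactly as you suggest: $\log\omega_{N-1}$ is read as a complex logarithm so that the sign of $\omega_{N-1}$ passes through the exponential.
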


\begin{proof}
We split the proof into two cases.  The first case is when the product on the left-hand side of \eqref{SzegoThIndCar} is non-zero.

Let us start by considering the function $F$. Formula \eqref{Khrushev} reads
\begin{equation*}
\Rl F(z)=\omega_{N-1}\frac{(1-|f_N(z)|^2)}{|\Phi_N^*(z)-z\Phi_N(z)f_N(z)|^2}, \qquad\qquad z\in\dT.
\end{equation*}
After taking the logarithm we arrive at
\begin{equation}\label{log}
\quad \log (\Rl F(z))= \log\omega_{N-1} + \log (1-|f_N(z)|^2) - \log |\Phi_N^*(z)-z\Phi_N(z)f_N(z)|^2. 
\end{equation}
We will proceed to evaluate the integral
\[
\int_0^{2\pi} \log (\Rl F(e^{i\theta})) \, \frac{d\theta}{2\pi},
\]
and then take its exponential.  By \eqref{log}, this integral splits into three pieces and the first one is simply a constant, which gives
\begin{equation}\label{Int1}
\exp \biggl( \int_0^{2\pi} \log \omega_{N-1}
 \, \frac{d\theta}{2\pi}\biggr)=\omega_{N-1}=\prod_{j=0}^{N-1}(1-|\alpha_j|^2)
\end{equation}
and we recall that this expression could be positive or negative.  To evaluate the second term we use Boyd's Theorem \eqref{BoydTh}
\begin{equation}\label{Int2}
\exp \biggl( \int_0^{2\pi} \log (1-|f_N(e^{i\theta})|^2) \, \frac{d\theta}{2\pi}\biggr)=
\prod_{j=N}^\infty (1-|\alpha_j|^2) 
\end{equation}

It remains to evaluate
\begin{equation}\label{lastpiece}
\int_0^{2\pi}\log(|\Phi_N^*(e^{i\theta})-e^{i\theta}\Phi_N(e^{i\theta})f_N(e^{i\theta})|^2)\frac{d\theta}{2\pi}.
\end{equation}
To do so, we notice that in \eqref{IndCond}, we do not assume that $N$ is the smallest natural number for which the second condition there holds.  Therefore, the relation \eqref{Khrushev} remains valid if we replace $N$ by $N+n$, where $n$ is any nonnegative integer.  Moreover, we can repeat all the above reasoning with $N$ replaced by $N+n$ . Clearly $\log(\Rl F(z))$ is independent of $n$, and hence so is its integral.  Furthermore, we notice that the product of \eqref{Int1} and \eqref{Int2} is independent of $n$, and therefore in \eqref{lastpiece} we may replace $N$ by $N+n$, where $n$ is a nonnegative integer and not change the value of the integral.  Therefore, it suffices to calculate
\begin{equation*}
\lim_{\nri}\int_0^{2\pi}\log(|\Phi_{N+n}^*(e^{i\theta})-e^{i\theta}\Phi_{N+n}(e^{i\theta})f_{N+n}(e^{i\theta})|^2)\frac{d\theta}{2\pi}.
\end{equation*}
To evaluate this integral, write $\Phi_{N+n}(z)=C_{N+n}(z)P_{N+n}(z)$, where $C_{N+n}$ is a monic polynomial that is non-vanishing in the closed unit disk and $P_{N+n}$ is a monic polynomial that is non-vanishing in the compliment of the open unit disk. Notice that since $C_{N+n}$ and $P_{N+n}$ are monic, it holds that $C_{N+n}^*(0)=1$ and $P_{N+n}^*(0)=1$. We can then write
\begin{align}
\nonumber&\log|\Phi_{N+n}^*(e^{i\theta})-e^{i\theta}\Phi_{N+n}(e^{i\theta})f_{N+n}(e^{i\theta})|^2=\log|P_{N+n}^*(\eitheta)|^2\\
\label{3terms}&\qquad\qquad\qquad\qquad+\log|C_{N+n}^*(\eitheta)|^2+\log\left|1-\frac{\eitheta P_{N+n}(\eitheta)}{P_{N+n}^*(\eitheta)}\,\frac{C_{N+n}(\eitheta)}{C_{N+n}^*(\eitheta)}\,f_{N+n}(\eitheta)\right|^2
\end{align}
The Mean Value Theorem immediately gives
\[
\int_0^{2\pi}\log|P_{N+n}^*(\eitheta)|^2\frac{d\theta}{2\pi}=\log|P_{N+n}^*(0)|=0,
\]
To evaluate the integral involving $C_{N+n}^*$, we recall that $C_{N+n}^*(0)=1$, so we write (for some $k\in\bbN$)
\[
C_{N+n}^*(z)=\prod_{j=1}^{k}\left(1-\frac{z}{\lambda_{j,n}}\right),
\]
where $\lambda_{j,n}\in\dD$ and $k$ is independent of $n$ by Corollary \ref{both2}.  This gives
\[
\begin{split}
\lim_{\nri}\int_0^{2\pi}\log|C_{N+n}^*(\eitheta)|^2\frac{d\theta}{2\pi}&=
\lim_{\nri}\int_0^{2\pi}\log|C_{N+n}(\eitheta)|^2\frac{d\theta}{2\pi}=\lim_{\nri}\log|C_{N+n}(0)|^2\\
&=\lim_{\nri}\log\left(\prod_{j=1}^{k}|\lambda_{j,n}|^{-2}\right)=\log\left(\prod_{j=0}^m|\lambda_{j}|^{-2}\right),
\end{split}
\]
where we used Lemma \ref{pzeros}.  To integrate the third term in (\ref{3terms}), notice that
\[
1-|f_{N+n}(\eitheta)|\leq\left|1-\frac{\eitheta P_{N+n}(\eitheta)}{P_{N+n}^*(\eitheta)}\,\frac{C_{N+n}(\eitheta)}{C_{N+n}^*(\eitheta)}f_{N+n}(\eitheta)\right|\leq1+|f_{N+n}(\eitheta)|
\]
and the same inequalities hold when we take a logarithm.  By writing
\[
\log\frac{1+|f_{N+n}(\eitheta)|}{1-|f_{N+n}(\eitheta)|}=\log(1-|f_{N+n}(\eitheta)|^2)-2\log(1-|f_{N+n}(\eitheta)|)
\]
and invoking \cite[Theorem 8.56]{Kh08} and Boyd's Theorem, we conclude that
\[
\lim_{\nri}\int_0^{2\pi}\log(1-|f_{N+n}(\eitheta)|)\frac{d\theta}{2\pi}=0
\]
and the same theorem then shows
\[
\lim_{\nri}\int_0^{2\pi}\log(1+|f_{N+n}(\eitheta)|)\frac{d\theta}{2\pi}=0.
\]
Therefore,
\begin{equation*}
\lim_{\nri}\int_0^{2\pi}\log\left|1-\frac{\eitheta P_{N+n}(\eitheta)}{P_{N+n}^*(\eitheta)}\,\frac{C_{N+n}(\eitheta)}{C_{N+n}^*(\eitheta)}f_{N+n}(\eitheta)\right|\frac{d\theta}{2\pi}=0
\end{equation*}
and hence
\begin{equation}\label{lastpiece3}
\exp\left(\int_0^{2\pi}\log(|\Phi_N^*(e^{i\theta})-e^{i\theta}\Phi_N(e^{i\theta})f_N(e^{i\theta})|^2)\frac{d\theta}{2\pi}\right)=\prod_{j=0}^m|\lambda_{j}|^{-2}.
\end{equation}
Combining the relations \eqref{Int1}, \eqref{Int2}, and \eqref{lastpiece3} completes the proof if the left-hand side of \eqref{SzegoThIndCar} is non-zero.

If the left-hand side of \eqref{SzegoThIndCar} is zero, then the product of \eqref{Int1} and \eqref{Int2} is zero, \cite[Theorem 17.17]{Rudin} shows that the integral \eqref{lastpiece} is finite, and $F$ does not have a pole at $0$, so the right-hand side of \eqref{SzegoThIndCar} is also zero and the desired equality still holds.
\end{proof}

We would like to stress again that, unlike the proof in \cite{S00}, our approach does not employ any result from the theory of generalized Carath\'eodory functions (or pseudo-Carath\'eodory functions) and, in fact, is based on rather elementary facts. Nevertheless, we could also make use of the factorization result from \cite{DGK86} to directly compute the integral \eqref{lastpiece} without taking limits.

\section{Verblunsky's Theorem}\label{verb}

In this section, we will prove the following result, which shows that Shohat's extension of Favard's Theorem has no analog in the the theory of OPUC.

\begin{theorem}\label{counter}
Consider the sequence of Verblunsky coefficients 
\[
\alpha_0,\ldots,\alpha_{N-1},0,0,\ldots,
\]
where $|\alpha_j|>1$ for some $j\in\{0,1,2,\ldots,N-1\}$.  The sequence of polynomials generated from these coefficients by the recursion (\ref{SzRec}) and (\ref{InCond}) is not an orthogonal set with respect to any signed measure $\mu$ on the unit circle.
\end{theorem}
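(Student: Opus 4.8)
The plan is to exploit the fact that, because the Verblunsky coefficients vanish from index $N$ on, the function $F$ of \eqref{CaratheInd} is an explicit \emph{rational} function with a pole inside $\dD$, whereas the Carath\'eodory-type transform of any orthogonalizing signed measure is analytic in $\dD$; since the two would necessarily have the same Taylor expansion at $0$, there can be no orthogonalizing measure. So first I would observe that, since $\alpha_n=0$ for $n\ge N$, the $N$-th Schur iterate $f_N$ of \eqref{SchurAlgInd} is identically $0$, and hence by Proposition~\ref{PropPoles} $F(z)=\Psi_N^*(z)/\Phi_N^*(z)$, with $F(0)=\Psi_N^*(0)/\Phi_N^*(0)=1$ because $\Phi_N$ and $\Psi_N$ are monic of degree $N$. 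Since $|\alpha_j|>1$ for some $j\in\{0,\dots,N-1\}$, Corollary~\ref{both2} gives a zero $\lambda$ of $\Phi_N^*$ in $\dD$, necessarily with $\lambda\ne0$ as $\Phi_N^*(0)=1$; by Proposition~\ref{PropPoles} (applied with $f_N\equiv0$) this $\lambda$ is an honest pole of $F$, so the Taylor expansion of $F$ about $0$ has radius of convergence at most $|\lambda|<1$.

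Next I would suppose, for contradiction, that $\{\Phi_n\}_{n\ge0}$ is orthogonal with respect to a nonzero signed measure $\mu$ on $\dT$, that is, $\int_0^{2\pi}e^{-ij\theta}\Phi_n(e^{i\theta})\,d\mu(\theta)=0$ for $0\le j\le n-1$ and all $n$. Set $m_k:=\int_0^{2\pi}e^{-ik\theta}\,d\mu(\theta)$, so $m_{-k}=\overline{m_k}$ and $m_0=\mu(\dT)$; writing $\Phi_n(z)=\sum_{k=0}^n\phi_{n,k}z^k$ with $\phi_{n,n}=1$, the $j=0$ relations read $m_n=-\sum_{k=0}^{n-1}\overline{\phi_{n,k}}\,m_k$ for all $n\ge1$. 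Thus every $m_k$ is determined by $m_0$; and $m_0=0$ would force all $m_k$ to vanish and hence $\mu=0$ (by density of trigonometric polynomials in $C(\dT)$), contrary to assumption. So $m_0\ne0$, and as rescaling $\mu$ preserves the orthogonality relations I may assume $m_0=1$. The Taylor coefficients of $F$ satisfy exactly the same recursion with the same initial value — this is the algebraic half of the OPUC correspondence (see \cite[Section 1.5]{OPUC1}), which is available precisely because $|\alpha_n|\ne1$ for all $n$ — so the function $F_\mu(z):=\int_0^{2\pi}\frac{e^{i\theta}+z}{e^{i\theta}-z}\,d\mu(\theta)$, whose Taylor expansion about $0$ is $m_0+2\sum_{k\ge1}m_kz^k$, agrees with $F$ near $0$. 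But $F_\mu$ is analytic on all of $\dD$, since $\mu$ is a finite measure and $z\mapsto\frac{e^{i\theta}+z}{e^{i\theta}-z}$ is analytic on $\dD$ uniformly in $\theta$; hence its Taylor expansion about $0$ has radius of convergence at least $1$, contradicting the preceding paragraph. Therefore no such $\mu$ exists.

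The step I expect to require the most care is the identification $F_\mu=F$. The point is that the passage from Verblunsky coefficients to (formal) moments is purely algebraic and hence survives verbatim for a normalized signed measure; the only things to watch are the exclusion of the degenerate case $\mu(\dT)=0$ and the fact that one uses the algebraic, not the measure-theoretic, part of Verblunsky's Theorem — which is exactly where the hypothesis \eqref{IndCond} enters. An alternative route, staying within the paper's own toolkit, is via the second-kind polynomials: the algebraic identity $\Phi_n^*(z)F_\mu(z)-\Psi_n^*(z)=O(z^n)$ together with the stabilizations $\Phi_{N+k}^*=\Phi_N^*$ and $\Psi_{N+k}^*=\Psi_N^*$ (immediate from $\alpha_j=0$ for $j\ge N$ via \eqref{SzRec} and \eqref{WallToPsi}) forces $\Phi_N^*F_\mu\equiv\Psi_N^*$, i.e.\ $F_\mu=F$ near $0$.
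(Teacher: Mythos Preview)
Your proof is correct and follows essentially the same approach as the paper: both identify $F=\Psi_N^*/\Phi_N^*$ (since $f_N\equiv0$), observe via Corollary~\ref{both2} and \eqref{det1} that $F$ has a genuine pole in $\dD$, and derive a contradiction from the fact that the moments of any orthogonalizing signed measure must coincide with the Maclaurin coefficients of $F$. The only cosmetic differences are that the paper phrases the contradiction as ``moments grow exponentially'' rather than ``$F_\mu$ is analytic on $\dD$,'' and handles the degenerate case $m_0=0$ via the F.\ and M.\ Riesz theorem rather than your (simpler) density-of-trigonometric-polynomials argument.
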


\begin{proof} Suppose for contradiction that there is a signed measure $\mu$ such that the sequence of polynomials $\{\Phi_m(z)\}_{m\geq0}$ generated by (\ref{SzRec}) using $\{\alpha_0,\ldots,\alpha_{N-1},0,0,\ldots\}$ is orthogonal with respect to $\mu$.  Let $\{c_j\}_{j\in\bbZ}$ be the sequence of moments of $\mu$
\[
c_j=\int e^{-ij\theta}d\mu(\theta).
\]
Rescaling the measure does not change orthogonality, so we will assume that the measure satisfies $c_0=1$ as its $0^{th}$ moment (the case $c_0=0$ requires a separate argument, which we provide later).  

Let $\{\Psi_n(z)\}_{n=0}^{\infty}$ be the sequence of second kind polynomials for the sequence $\{\alpha_0,\ldots,\alpha_{N-1},0,0,\ldots\}$.  From \cite[Equation 3.11]{Zayed}, we see that for $j=1,2,\ldots,m$, the $j$-th Maclaurin coefficient of the rational function $\frac{\Psi_m^*(z)}{\Phi_m^*(z)}$ is twice the $j$-th moment of the measure $\mu$ (see also \cite[Theorem 6.1]{Gero}).

For each natural number $m$, define $g_m(z):=\frac{\Psi_m^*(z)}{\Phi_m^*(z)}$.  By assumption, the functions $\left\{g_m(z)\right\}_{m=N}^{\infty}$ are all the same.  Therefore, for every $j\in\bbN$, the $j$-th Maclaurin coefficient of $g_{N}(z)$ is equal to twice the $j$-th moment of the measure $\mu$.  However, since $|\alpha_k|>1$ for some $k\leq N-1$, the polynomial $\Phi_{N}^*(z)$ vanishes somewhere inside $\dD$ (by Corollary \ref{both2}) and the relation \eqref{det1} implies $\Psi_{N}^*(z)$ shares no common zeros with $\Phi_{N}^*(z)$, so the radius of convergence of the Maclaurin series for $g_{N}$ must be strictly smaller than $1$.  It follows that the moments of $\mu$ grow exponentially through some subsequence, which is impossible.  Therefore, no such $\mu$ can exist with $c_0\neq0$.

Now suppose $\{\Phi_m\}_{m\geq0}$ are orthogonal with respect to a signed measure $\mu$ satisfying $c_0=0$.  It easily follows from the orthogonality relations that all of the moments of $\mu$ are $0$.  The F. and M. Riesz Theorem (see \cite[Theorem 17.13]{Rudin}) combined with \cite[Theorem 5.15]{Rudin} implies that $\mu$ is the zero measure, which gives a contradiction.
\end{proof}

Thus, one comes to the conclusion that signed measures are not natural objects in the context of the study of polynomials generated by sequences that satisfy \eqref{IndCond}. At the same time, the previous sections have shown that a convenient tool for analysis of the theory lying behind the condition \eqref{IndCond} is the collection of pseudo-Carath\'eodory functions. Moreover, non-trivial probability measures and Carath\'eodory functions are in one-to-one correspondence and it is through the use of pseudo-Carath\'eodory functions that we may extend Verblunsky's Theorem to the case we have been considering.

To do so, we need to recall the Schur algorithm \eqref{SchurAlgInd} and its generalization to our setting.  If we are given a function $F$ that is meromorphic on $\dD$ and satisfies $F(0)=1$, then we can define $f=f_0$ by \eqref{CaratheInd}.   We can then inductively define a sequence of functions $\{f_n\}_{n=0}^{\infty}$ by inverting \eqref{SchurAlgInd} and setting
\begin{equation}\label{genschur}
f_{n+1}(z)=\frac{1}{z}\cdot\frac{f_n(z)-f_n(0)}{1-\overline{f_n(0)}f_n(z)},\qquad n=0,1,2,\ldots,
\end{equation}
as long as $0$ is not a pole of $f_n$ for any $n\in\bbN\cup\{0\}$.  We then get a sequence of Verblunsky coefficients by taking $\{f_n(0)\}_{n=0}^{\infty}$.  Notice that if $|f_n(0)|=1$, then $0$ is a pole of $f_{n+1}$.  In this way, provided that $0$ is not a pole of $f_n$ for any $n$, one can pass from properly normalized meromorphic functions to sequences of complex numbers, though the sequence obtained in this way may not satisfy \eqref{IndCond}.
However, every sequence satisfying \eqref{IndCond} determines a meromorphic function that generates the given sequence through this procedure, as our next theorem shows.  In fact, the function so determined is a pseudo-Carath\'eodory function.

\begin{theorem}\label{ThStrange}
Suppose the sequence $\{\alpha_n\}_{n=0}^{\infty}$ satisfies \eqref{IndCond}.  There exists a pseudo-Carath\'eodory function $F_*$ such that the function
\begin{equation}\label{f0def}
f_0(z):=\frac{1}{z}\,\frac{F_*(z)-F_*(0)}{F_*(z)+F_*(0)}
\end{equation}
determines, via \eqref{genschur}, a sequence $\{f_n(z)\}_{n=0}^{\infty}$ that satisfies $f_n(0)=\alpha_n$ for all $n\geq0$.  If $G_*$ is another pseudo-Carath\'eodory function with the same property, then $G_*$ is a scalar multiple of $F_*$.
\end{theorem}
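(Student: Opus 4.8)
The plan is to exhibit one concrete $F_*$ and then show that the defining property \eqref{f0def} pins it down up to a nonzero scalar.

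\textbf{Existence.} I would take $F_*:=\epsilon_{N-1}F$, where $F$ is the function attached to $\{\alpha_n\}$ by \eqref{MainRepr}--\eqref{CaratheInd}. By Theorem \ref{eref} this $F_*$ is pseudo-Carath\'eodory, and $F_*(0)=\epsilon_{N-1}\neq0$. Since the right side of \eqref{f0def} is unchanged when $F_*$ is multiplied by a nonzero constant and $\epsilon_{N-1}^2=1$, applying \eqref{f0def} to $F_*$ returns $f_0=\frac1z\,\frac{F-1}{F+1}=f$. Now $f$ is, by construction, the composition of the maps \eqref{SchurAlgInd} for $n=0,\dots,N-1$ applied to the classical Schur function $f_N$ of $\{\alpha_n\}_{n\ge N}$, and the classical Schur algorithm continues past $f_N$; so there is a sequence $\{f_n\}_{n\ge0}$ with $f_0=f$ satisfying \eqref{SchurAlgInd} for all $n$. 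Putting $z=0$ in \eqref{SchurAlgInd} gives $f_n(0)=\alpha_n$ (so no $f_n$ has a pole at $0$), and solving \eqref{SchurAlgInd} for $f_{n+1}$ reproduces \eqref{genschur}; hence \eqref{genschur} run from $f_0$ generates exactly $\{f_n\}$, with $f_n(0)=\alpha_n$. That is the asserted property.

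\textbf{Uniqueness.} Suppose $G_*$ is pseudo-Carath\'eodory with the same property. Then $G_*\not\equiv0$ and $G_*(0)\neq0$, since otherwise the right side of \eqref{f0def} is not a function that is analytic at $0$ with value $\alpha_0$. I would set $\hat G:=G_*/G_*(0)$, so $\hat G(0)=1$ and $\hat G=\frac{1+zg_0}{1-zg_0}$ where $g_0$ is the function whose \eqref{genschur}-iterates $\{g_n\}$ satisfy $g_n(0)=\alpha_n$. As before, $g_n(0)=\alpha_n$ turns \eqref{genschur} back into \eqref{SchurAlgInd}, and composing the first $N$ of these relations writes $g_0$ in the form \eqref{MainRepr} with $g_N$ in place of $f_N$ and with the \emph{same} Wall polynomials $A_{N-1},B_{N-1}$ (which depend only on $\alpha_0,\dots,\alpha_{N-1}$). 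The transformation $w\mapsto\frac{A_{N-1}+zB_{N-1}^{*}w}{B_{N-1}+zA_{N-1}^{*}w}$ has determinant $z\bigl(B_{N-1}B_{N-1}^{*}-A_{N-1}A_{N-1}^{*}\bigr)=\omega_{N-1}z^{N}$, a nonzero polynomial by \eqref{WallonT}, so it is invertible; therefore $g_0=f_0$ if and only if $g_N=f_N$, and it remains to prove the latter.

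\textbf{The key step.} This is where the pseudo-Carath\'eodory hypothesis on $G_*$ enters. Writing $G_*=P/Q$ with $P,Q$ bounded and analytic in $\dD$, the function $g_0=\frac1z\,\frac{P-G_*(0)Q}{P+G_*(0)Q}$ is again a ratio of bounded analytic functions (division by $z$ is harmless because $g_0(0)=\alpha_0$ is finite), hence of bounded type, and so is every iterate $g_n$, in particular $g_N$. Now $g_N$ is of bounded type, $g_N(0)=\alpha_N$, its \eqref{genschur}-iterates have no pole at $0$, and their values there are $\alpha_N,\alpha_{N+1},\dots$, all in $\dD$. By the classical theory of the Schur algorithm --- a function of bounded type all of whose Schur parameters lie in $\dD$, and whose algorithm never terminates, is a Schur function (see \cite[Ch.~8]{Kh08}) --- $g_N$ is a Schur function, hence the unique Schur function with Schur parameters $\{\alpha_n\}_{n\ge N}$, i.e.\ $g_N=f_N$. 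Consequently $g_0=f_0$, so $\hat G=\frac{1+zf_0}{1-zf_0}=F$ and $G_*=G_*(0)\hat G=\bigl(\epsilon_{N-1}G_*(0)\bigr)F_*$, a scalar multiple of $F_*$. I expect the quoted classical fact to be the only non-formal ingredient; if a self-contained argument is preferred, it can be modelled on the Wall-polynomial reasoning already used in Propositions \ref{PropPoles} and \ref{pzeros} --- subtract the composition identities for $g_N$ and $f_N$ and use that $B_n^{(N)}\bigl(B_n^{(N)}\bigr)^{*}-A_n^{(N)}\bigl(A_n^{(N)}\bigr)^{*}$ is a monomial times $\prod_{j<n}(1-|\alpha_{N+j}|^2)$, together with $A_n^{(N)}/B_n^{(N)}\to f_N$ and $z^{n}\to0$ locally uniformly --- the delicate point being to control the iterates $g_{N+n}$, which is why boundedness of type is used once more.
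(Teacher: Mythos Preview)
Your proof is correct and follows the same approach as the paper: take $F_*=\epsilon_{N-1}F$ for existence, and for uniqueness show that the $N$-th iterate $g_N$ must coincide with the Schur function $f_N$, whence $g_0=f_0$ and $G_*$ is a scalar multiple of $F_*$. Your treatment is in fact slightly more explicit than the paper's at the key step, since you spell out that $g_N$ inherits bounded type from the pseudo-Carath\'eodory hypothesis on $G_*$ --- the paper simply cites \cite[Section~9]{DK03} for the conclusion ``$g_N$ is a Schur function'' without isolating this ingredient.
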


\noindent\textit{Remark.}  If $0$ is a pole of $F_*$, then the function $f_0$ determined by the formula \eqref{f0def} is defined to be $-1/z$.  With this understanding, the formula \eqref{f0def} is well-defined for all pseudo-Carath\'eodory functions, though the resulting function $f_0$ may not be analytic at $0$.

\smallskip

\noindent\textit{Remark.}  A result similar to our Theorem \ref{ThStrange} but for a modified version of the Schur algorithm was obtained in \cite[Theorem 3.2]{DW05}.

\begin{proof}
Given $\{\alpha_n\}_{n=0}^{\infty}$ satisfying \eqref{IndCond}, define the function $F$ as in \eqref{CaratheInd} so that $F_*:=\varepsilon_{N-1}F$ is a pseudo-Carath\'eodory function by Theorem \ref{eref}.  Notice that $\varepsilon_{N-1}=F_*(0)$.  If $f_N$ denotes the Schur function corresponding to $\{\alpha_n\}_{n=N}^{\infty}$, then by construction, the function $f_0$ defined by \eqref{f0def} must be given by \eqref{MainRepr}.  Since this identity was the result of the recursion \eqref{SchurAlgInd}, it must be the case that the functions $\{f_n(z)\}_{n=0}^{\infty}$ defined inductively by $f_0$ and \eqref{genschur} satisfy $f_n(0)=\alpha_n$ for all $n\geq0$ as desired.

To establish uniqueness, suppose $G_*$ is another pseudo-Carath\'eodory function with the desired property and $g_0$ the corresponding function defined by \eqref{f0def} with $F_*$ replaced by $G_*$.  In particular, we know that the recursion \eqref{genschur} with seed function $g_0$ never produces an iterate with a pole at $0$, so $G_*(0)$ exists and is non-zero.  Furthermore, the iterate $g_N$ is such that $|g_{N+n}(0)|<1$ for all $n=0,1,2,\ldots$ so $g_N$ must be a Schur function by the Schur algorithm \cite[Section 9]{DK03}.  From the classical theory of OPUC, it must be the \textit{unique} Schur function corresponding to $\{\alpha_n\}_{n=N}^{\infty}$, and hence $g_N=f_N$, which means $g_0$ is given by the right-hand side of \eqref{MainRepr} as well.  It follows that $F_*(z)/F_*(0)=G_*(z)/G_*(0)$ as desired.
\end{proof}

The converse to Theorem \ref{ThStrange} is not true, i.e. it is not the case that any pseudo-Carath\'{e}odory function produces a sequences that satisfies \eqref{IndCond} by means of the above procedure.  Indeed, consider the function $F_*(z)=2+1/B(z)$, where $B$ is a Blaschke product with infinitely many zeroes in $\dD$.  From \cite[Theorem 15.24]{Rudin} we know that $|B|=1$ almost everywhere on $\dT$. Hence, $F_*$ is a pseudo-Carath\'eodory function, but it has infinitely many poles in $\dD$ while we have shown that sequences that satisfy \eqref{IndCond} correspond to pseudo-Carath\'eodory functions with only finitely many poles in $\dD$ (see Proposition \ref{PropPoles}).  Thus, $F_*$ either produces a function $f_0$ via \eqref{f0def} such that the recursion \eqref{genschur} terminates after finitely many steps (because some $f_n$ has a pole at $0$), or the corresponding sequence $\{f_n(0)\}_{n=0}^{\infty}$ has infinitely many elements outside $\overline{\dD}$.  Therefore, we see that sequences satisfying \eqref{IndCond} are in one to one correspondence with a \textit{proper} subclass of appropriately normalized pseudo-Carath\'eodory functions, and it is to functions in this subclass that we can apply Theorem \ref{ST}.

The reader familiar with the theory of generalized Schur functions and augmented Schur parameters (as described in \cite{DW05}) may recall the generalized Schur algorithm that appears in \cite{AADL01} and is expounded in \cite{ADL07} (see also \cite{DGK86}).  The generalized Schur algorithm leads to the bijection between generalized Schur functions and augmented Schur parameters that is established in \cite[Theorem 3.2]{DW05}. It is in many ways superior to \eqref{genschur} in its scope and utility because it applies to sequences that do not satisfy \eqref{IndCond}.  However, the standard connection to the theory of OPUC no longer holds for the generalized Schur algorithm from \cite{ADL07} and we have seen that this connection is essential for our use of the classical Schur algorithm \eqref{SchurAlgInd} and \eqref{genschur} when proving Theorem \ref{ST}. A version of Szeg\H{o}'s Theorem that applies to all pseudo-Carath\'eodory functions remains elusive.

\bigskip

\noindent\textbf{Acknowledgements.}  The second author would like to thank Mourad Ismail for encouraging us to pursue the results that lead to Theorem \ref{counter}.


\end{document}